\newtheorem{theorem}{Theorem}[section]
\newtheorem{lemma}[theorem]{Lemma}
\newtheorem{definition}{Definition}[section]
\numberwithin{equation}{section}
\newcommand{\rw}{\rightarrow}
\newcommand{\D}{\Delta}
\newcommand{\ra}{\rightarrow}
\newcommand{\f}{\frac}
\renewcommand{\l}{\lambda}
\newcommand{\be}{\begin{equation}}
\renewcommand{\ra}{\rightarrow}
\newcommand{\ee}{\end{equation}}
\newcommand{\bea}{\begin{eqnarray}}
\newcommand{\eea}{\end{eqnarray}}
\newcommand{\bna}{\begin{eqnarray*}}
\newcommand{\ena}{\end{eqnarray*}}
\renewcommand{\O}{\Omega}
\newcommand{\io}{\int_{\Omega}}
\renewcommand{\le}{\left}
\newcommand{\ri}{\right}
\newcommand{\ve}{\vert}
\newcommand{\V}{\Vert}
\newcommand{\ep}{\epsilon}
\newcommand{\na}{\nabla}
\journal{***}
\begin{document}

\begin{frontmatter}

\title{ Multiple solutions of a nonlinear biharmonic equation on graphs}

\author{Songbo Hou \corref{cor1}}
\ead{housb@cau.edu.cn}
\address{Department of Applied Mathematics, College of Science, China Agricultural University,  Beijing, 100083, P.R. China}

\cortext[cor1]{Corresponding author: Songbo Hou}

\begin{abstract}

In this paper, we  consider a  biharmonic equation with respect to the Dirichlet problem on a domain of a locally finite graph.
Using the variation method, we prove that the equation has two distinct solutions under certain conditions.

\end{abstract}

\begin{keyword}  Locally finite graph\sep Biharmonic equation\sep  Distinct solutions.

\MSC [2010] 35A15, 35G30

\end{keyword}

 \end{frontmatter}

\section{Introduction}

The existence and non-existence of solutions of boundary value problems for biharmonic equations has been studied by many authors. A lot of results are devoted to the  following problem in $H^2_0(\O)$,
\begin{equation}\label{eq1}
\left\{
\begin{aligned}
&\Delta^2u=\l u+\vert u\vert^{p-2}u,\,x\in\O\\
&u|_{\partial\O}=0,\,\f{\partial u}{\partial n}\bigg|_{\partial \O}=0,
\end{aligned}\right.
\end{equation}
where $\O$ is a bounded smooth domain in $\mathbb{R}^N$; $\l$ is a constant; $p=\f{2N}{N-4}$. If $\l<0$ and $\O$ is star-shaped, then the problem (1.1) has only trivial solution. Denote by $\l_1$ the first eigenvalue of the problem
\begin{eqnarray*}
\left\{
\begin{aligned}
&\Delta^2u=\l u,\,x\in\O\\
&u|_{\partial\O}=0,\,\f{\partial u}{\partial n}\bigg|_{\partial \O}=0.
\end{aligned}\right.
\end{eqnarray*}

If $0<\l<\l_1$ and $N\geq 8$, then Problem (1.1) has at least one nontrivial solution \cite{EFJ}. For $5\leq n\leq 7$, there exists a constant $\bar{\l}>0$ such that
 Problem (1.1) has a non-trivial solution for all $\l\in(\bar{\l},\l_1)$.
Deng and Wang \cite{DW99} studied the existence and non-existence of multiple solutions of biharmonic equations boundary value problem,
\begin{eqnarray*}
\left\{
\begin{aligned}
&\Delta^2u=\l u+\vert u\vert^{p-2}u+f(x),\,x\in\O,\\
&u|_{\partial\O}=0,\,\f{\partial u}{\partial n}\bigg|_{\partial \O}=0.
\end{aligned}\right.
\end{eqnarray*}
We refer the reader to \cite{AF13,CD19,WS2009,WZ08} for more related results.

In this paper,we will study the similar problem on a graph. A discrete graph is denoted by $G=(V,E)$,  where $V$ is the vertex set and $E$  is the edge set.
 We say that $G$ is  locally finite if  for any $x\in V$, there are only finite $y\in V$ such that $xy\in E$. A graph is called connected  if  for any $x,y\in V$,
they can be  connected via finite edges.     Throughout this paper, we assume that $G$ is locally finite and connected. Let $\omega_{xy}$ be  the weight of an edge $xy\in E$ such that $\omega_{xy}>0$ and $\omega_{xy}=\omega_{yx}$. We use a positive function $\mu:V\rw \mathbb{R}^+$ to define a measure on $G$. For a bounded domain $\O\subset V$, the boundary of $\O$ is defined as
$$\partial\O:=\{y\notin\O:\exists \,x\in\O\, \text{such that}\,xy\in E\}.$$
We introduce some notations about the partial differential equations on a graph. The $\mu-$ Laplacian of a function $u:V\rw \mathbb{R}$ is defined by
$$\D u(x):=\f{1}{\mu(x)}\sum\limits_{y\sim x}\omega_{xy}(u(y)-u(x)),$$ where $y\sim x$ stands for $xy\in E$. For any two functions $u$ and $v$ on the graph, the gradient form is defined by
$$\Gamma(u,v)=\f{1}{2\mu(x)}\sum\limits_{y\sim x}\omega_{xy}(u(y)-u(x)(v(y)-v(x)).$$
If $u=v$, we write $\Gamma(u)=\Gamma(u,u)$, which is used to define the length of the gradient for $u$

$$\vert \na u\vert(x) =\sqrt{\Gamma(u)(x)}=\Bigg(\f{1}{2\mu(x)}\sum\limits_{y\sim x}\omega_{xy}(u(y)-u(x))^2\Bigg)^{1/2}.$$
For  a function $u$ over $V$, the integral is defined by
$$\int_V ud\mu=\sum\limits_{x\in V}\mu(x)u(x).$$
Recently,   Grigor'yan, Lin and  Yang \cite{GLY16, GLY17} applied the mountain-pass theorem to establish existence results for some nonlinear equations.
 Zhang and Zhao \cite{ZZ18} studied the convergence of ground state solutions for nonlinear
Schr\"{o}dinger equations via the Nehari method on graphs. Using the similar method, Han, Shao and Zhao \cite{HSZ}  studied existence and convergence of solutions for nonlinear biharmonic equations on graphs. One may refer to \cite{LW17} for the results about the heat equation and refer to \cite{ GJ18, GLY, LY20} for the results about the Kazdan-Warner equation on a graph.

 In this paper, we  consider the following equation
 \be\label{bhe}
\left\{
\begin{aligned}
&\Delta^2u=\l u+\ve u\ve^{p-2}u+\ep f,&&\text{in}&&\O,\\
&u=0,&&\text{on}&&{\partial\O},
\end{aligned}\right.
\ee
where $\O$ is a bounded domain of $V$; $\l$, $p$ and $\ep$ are positive constants, $p>2$; $f$ is a given function on $V$.

Let $W^{2,2}(\O)$ be the space of functions $u:V\rw \mathbb{R}$ under the norm

\be\label{norm}\V u\V_{W^{2,2}(\O)}=\left(\int_{\O\cup\partial \O}(\ve \D u\ve^2+\ve \na u\ve^2)d\mu+\io u^2d\mu\right)^{\f{1}{2}}.\ee

 Let $H(\O)=W^{2,2}(\O)\cap W_0^{1,2}(\O)$, where $W_0^{1,2}(\O)$ is the completion of $C_c(\O)$ under the norm   $$ \label{nor}\V u\V_{W^{1,2}_0(\O)}=\left(\int_{\O\cup\partial \O}\ve \na u\ve^2d\mu+\io u^2d\mu\right)^{\f{1}{2}}.$$

 Define
$$\V u\V_H=\left(\int_{\O\cup\partial \O}\ve \D u\ve^2d\mu\right)^{\f{1}{2}}$$ for any $u\in H$. It is easy to see that $\V \cdot\V_H$ is a norm on $H$.
Noting that the dimension of $W^{2,2}(\O)$ is finite, we have that
$\V u\V_H=\left(\int_{\O\cup\partial \O}\ve \D u\ve^2d\mu\right)^{\f{1}{2}}$ is a norm equivalent to (\ref{norm}) on $H(\O)$.
Lemma 2.6 in \cite{HSZ} implies that  $H(\O)$ is embedded in $L^q(\O)$ for all $1\leq q<+\infty$ and there is a constant depending only on $q$ and $\O$ such that
\be \le( \io \ve u\ve^q d\mu\ri)^{1/q}\leq C\left(\int_{\O\cup\partial \O}\ve \D u\ve^2d\mu\right)^{\f{1}{2}}.\ee
 \begin{definition}
For a $u\in H$, if for any $\varphi\in C_c(\O)$, there holds that
$$\int_{\O\cup\partial\O}\D u\D\varphi d\mu=\l\int_{\O}u\varphi d\mu+\int_{\O}\vert u\vert^{p-2} u\varphi d\mu+\ep\int_{\O}f\varphi d\mu,$$
then $u$ is called a weak solution of (\ref{bhe}).
\end{definition}

Define
$$J_{\ep}(u)=\f{1}{2}\int_{\O\cup\partial \O}\ve \D u\ve^2d\mu-\f{1}{2}\io\l u^2d\mu-\f{1}{p}\io \ve u\ve^pd\mu-\ep\io f(x)ud\mu,$$ which will be used in the later variation procedure.
Define $$\l_1(\O)=\inf\limits_{u\nequiv 0,\,u\vert_{\partial \O}=0}\f{\int_{\O\cup\partial \O}\vert \D u\vert^2 d\mu}{\io u^2d\mu}.$$
 Using the variation method similar to that in \cite{GLY16,GLY17}, we prove the following theorem.
\begin{theorem} Let $G=(V,E)$ be a locally finite graph. Suppose that $0<\l<\l_1(\O)$, $f\in H'(\O)$  where $H'$ is the dual space of $H$.  Then there exists $\ep_1>0$ such that
(\ref{bhe}) has two distinct solutions if $0<\ep<\ep_1$.
\end{theorem}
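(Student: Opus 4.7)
The plan is the classical two-critical-point variational argument: produce one solution as a local minimizer near $0$ (created by the small linear perturbation $\ep f$) and a second critical point via the Mountain Pass Theorem at a strictly positive minimax level. Since $H(\O)$ is finite dimensional, the Palais--Smale condition and all embeddings are essentially automatic, so the heart of the proof is to set up the correct geometry of $J_\ep$.

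\emph{Mountain-pass geometry at the origin.} From $0<\l<\l_1(\O)$ one obtains the coercive bound
$$\f{1}{2}\int_{\O\cup\p\O}|\D u|^2 d\mu-\f{\l}{2}\io u^2\,d\mu\geq \f{1}{2}\le(1-\f{\l}{\l_1(\O)}\ri)\V u\V_H^2,$$
which combined with the Sobolev-type embedding of $H(\O)$ into $L^p(\O)$ and the dual estimate $|\io fu\,d\mu|\leq\V f\V_{H'}\V u\V_H$ yields
$$J_\ep(u)\geq A\V u\V_H^2-B\V u\V_H^p-\ep\V f\V_{H'}\V u\V_H$$
with $A,B>0$ depending only on $\l,p,\O$. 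Since $p>2$, by fixing $\rho>0$ small and then $\ep_1>0$ small, one finds $\al>0$ such that $J_\ep(u)\geq\al$ on the sphere $\V u\V_H=\rho$ for every $0<\ep<\ep_1$.

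\emph{First solution (local minimum).} Choosing any $v\in H$ with $\io fv\,d\mu>0$, the expansion $J_\ep(tv)=-t\ep\io fv\,d\mu+O(t^2)$ shows that $c_1:=\inf_{\overline{B_\rho}}J_\ep<0<\al$. Because $H$ is finite dimensional, $\overline{B_\rho}$ is compact and $c_1$ is attained at some $u_1$, which by the previous step must lie in the open ball $B_\rho$ and hence is a critical point of $J_\ep$ with $J_\ep(u_1)<0$. (If the linear functional induced by $f$ vanishes on $H$, simply take $u_1=0$.)

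\emph{Second solution (mountain pass).} For any fixed $w\in H\setminus\{0\}$, $p>2$ gives $J_\ep(tw)\ra-\iy$ as $t\ra+\iy$, so there exists $e\in H$ with $\V e\V_H>\rho$ and $J_\ep(e)<0$. Setting
$$c_2=\inf_{\g\in\G}\max_{t\in[0,1]} J_\ep(\g(t)),\qquad \G=\{\g\in C([0,1],H):\g(0)=0,\ \g(1)=e\},$$
every path in $\G$ crosses the sphere of radius $\rho$, hence $c_2\geq\al>0$. Boundedness of $(PS)$ sequences follows from the identity $J_\ep(u_n)-\f{1}{p}\langle J_\ep'(u_n),u_n\rangle=c_2+o(1)+o(\V u_n\V_H)$ combined with the coercivity of the first step; finite dimensionality then produces a convergent subsequence, so $(PS)_{c_2}$ holds. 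The Mountain Pass Theorem delivers a critical point $u_2$ with $J_\ep(u_2)=c_2>0>J_\ep(u_1)$, so $u_1\neq u_2$. The only technical delicacy is the simultaneous choice of $\rho$ and $\ep_1$ ensuring that both $J_\ep\geq\al$ on $\V u\V_H=\rho$ and $\inf_{B_\rho}J_\ep<0$ hold; once these are secured, every remaining step reduces to the fixed coercivity bound and the finite dimensionality of $H(\O)$.
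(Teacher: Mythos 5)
Your proposal is correct and follows the same two-critical-point architecture as the paper: a local minimizer of $J_\ep$ in a small ball about the origin, forced to have negative energy by the perturbation $\ep f$, plus a mountain-pass critical point at a strictly positive level, with the Palais--Smale condition obtained from the combination $J_\ep(u_n)-\f{1}{p}\langle J_\ep'(u_n),u_n\rangle$ together with the finite dimensionality of $H(\O)$. The one substantive divergence is how the negativity of the infimum near $0$ is established. The paper (Lemma 2.3) first solves the auxiliary linear problem $\Delta^2u=\l u+f$ by minimizing the functional $J_f$, obtains a solution $\bar u$, deduces $\io f\bar u\,d\mu=\int_{\O\cup\partial\O}\ve\D\bar u\ve^2d\mu-\l\io\bar u^2\,d\mu>0$ from the weak formulation, and then differentiates $t\mapsto J_\ep(t\bar u)$ at $t=0$; you instead pick any $v$ with $\io fv\,d\mu>0$ and read the sign off the first-order term of $J_\ep(tv)$. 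Your route is shorter, needs no auxiliary equation, and explicitly covers the degenerate case in which $f$ induces the zero functional on $H$ (where the paper's strict inequality silently fails because $\bar u=0$). A second, purely cosmetic difference: you fix the mountain-pass radius $\rho$ independently of $\ep$ and then shrink $\ep_1$, whereas the paper takes the $\ep$-dependent radius $r_\ep=\sqrt{\ep}$ with barrier $\delta_\ep=\tau\ep/4$; either choice yields a positive barrier separating the two critical levels, which is all the distinctness argument requires.
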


\section {Proof of the main results }
\begin{lemma}
There exist  positive constants $r_{\ep}$ and $\delta_{\ep}$ such that $J_{\ep}\geq \delta_{\ep}$ for all $u\in H$ with $r_{\ep}\leq\V u\V_{H}\leq 2 r_{\ep}$ if $0<\ep<\ep_1$ for a sufficiently small $\ep_1$.

\end{lemma}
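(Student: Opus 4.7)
The plan is to verify the standard mountain-pass geometry around $0$ for the perturbed functional $J_\ep$, by producing a lower bound depending only on $\V u\V_H$ and then choosing the radius and $\ep_1$ so that this bound is strictly positive on the annulus.

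First I would control each non-leading term in $J_\ep$ separately. From the definition of $\l_1(\O)$ and the hypothesis $\l<\l_1(\O)$,
\[
\l\io u^2\,d\mu\leq \f{\l}{\l_1(\O)}\V u\V_H^2.
\]
The Sobolev-type embedding inequality displayed just above the definition of weak solution, applied with $q=p$, gives a constant $C=C(p,\O)$ such that $\io |u|^p\,d\mu\leq C^p\V u\V_H^p$. Since $f\in H'(\O)$, duality yields $\io f u\,d\mu\leq \V f\V_{H'}\V u\V_H$. Combining, with $A:=1-\l/\l_1(\O)>0$,
\[
J_\ep(u)\geq \f{A}{2}\V u\V_H^2-\f{C^p}{p}\V u\V_H^p-\ep\V f\V_{H'}\V u\V_H.
\]

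Next, exploiting $p>2$, I would fix $r>0$ small enough that $(C^p/p)(2r)^{p-2}\leq A/4$, so that for every $u$ with $r\leq\V u\V_H\leq 2r$,
\[
\f{A}{2}\V u\V_H^2-\f{C^p}{p}\V u\V_H^p\geq \f{A}{4}\V u\V_H^2\geq \f{A r^2}{4}.
\]
Then I would set $\ep_1:=Ar/(16\V f\V_{H'})$; for $0<\ep<\ep_1$ and $\V u\V_H\leq 2r$ the perturbation is bounded by $\ep\V f\V_{H'}\V u\V_H\leq A r^2/8$, hence
\[
J_\ep(u)\geq \f{A r^2}{4}-\f{A r^2}{8}=\f{A r^2}{8}=:\d_\ep,
\]
with $r_\ep:=r$, proving the lemma.

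There is no real obstacle: the embedding estimate and the eigenvalue inequality are already at hand, and the rest is elementary. The only mild subtlety is choosing the annulus small enough that the super-quadratic term remains subdominant to the quadratic, which is the reason $p>2$ is needed and why one must also shrink $\ep$ (since the linear-in-$\ep$ term cannot be absorbed by the quadratic uniformly away from $0$).
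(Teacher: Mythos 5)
Your argument is correct and is built on exactly the same decomposition as the paper's: the eigenvalue inequality $\l\int_{\O}u^2\,d\mu\le (\l/\l_1(\O))\Vert u\Vert_H^2$, the embedding $\int_{\O}|u|^p\,d\mu\le C^p\Vert u\Vert_H^p$, and the duality bound on $\int_{\O}fu\,d\mu$, yielding the same three-term lower bound (your $A$ is the paper's $\tau$). The one genuine difference is the choice of radius. The paper takes $r_{\ep}=\sqrt{\ep}$, shrinking with $\ep$, and verifies positivity on the annulus via the limit $\lim_{\ep\to0^+}\bigl(\tfrac{\tau}{2}\sqrt{\ep}-2^{p-2}C\ep^{(p-1)/2}-\ep\Vert f\Vert_{H'}\bigr)/\bigl(\tfrac{\tau}{2}\sqrt{\ep}\bigr)=1$, ending with $\delta_{\ep}=\tau\ep/4$. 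You instead fix $r$ once and for all so that the superquadratic term is dominated by half the quadratic term on $\Vert u\Vert_H\le 2r$ (this is where $p>2$ enters, just as in the paper), and then shrink only $\ep$. Your version is slightly more elementary (explicit algebra rather than an asymptotic comparison) and gives a gap $\delta=Ar^2/8$ that is uniform in $\ep$, a marginally stronger conclusion. Two small remarks: the paper's later Lemma 2.4 cites $r_{\ep}=\sqrt{\ep}$ explicitly when defining the ball $B_{2r_{\ep}}$ on which the second solution is found, so substituting your proof would require only that cosmetic adjustment (the logic is unaffected); and your formula $\ep_1=Ar/(16\Vert f\Vert_{H'})$ tacitly assumes $\Vert f\Vert_{H'}\ne 0$ --- if $f\equiv 0$ the perturbation term vanishes and any $\ep_1$ works, so this is harmless but worth a word.
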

\begin{proof}
\be \label{gi1}\begin{split}
J_{\ep}(u)\geq &\f{1}{2}\V u\V_{H}^2-\f{\l}{2}\io\ve u\ve^2d\mu-\f{C}{p}\V u\V_H^{p}-\ep \V f\V_{H^{\prime}}\V  u\V_{H}\\
\geq & \f{\tau}{2} \V u\V_{H}^2-\f{C}{p}\V u\V_H^{p}-\ep \V f\V_{H^{\prime}}\V  u\V_{H}\\
\geq & \V u\V_{H}\le( \f{\tau}{2}\V u\V_{H}-\f{C}{2}\V u\V^{p-1}-\ep \V f\V_{H^{\prime}}\ri),
\end{split}
\ee
where $\tau=\f{\l_1(\O)-\l}{\l_1(\O)}$.
Take $r_{\ep}=\sqrt{\ep}$.
 Since $$\lim\limits_{\ep\rightarrow 0^+}\f{ \f{\tau}{2}\sqrt{\ep}-2^{p-2}C\ep^{(p-1)/2}-\ep \V f\V_{H^{\prime}}}{\f{\tau}{2}\sqrt{\ep}}=1, $$ there exists some $\ep_1$ such that $$\f{\tau}{2}\sqrt{\ep}-2^{p-2}C\ep^{(p-1)/2}-\ep \V f\V_{H^{\prime}}\geq \frac{\tau}{4} \sqrt{\ep} $$ if $0<\ep <\ep_1$.

   Setting $\delta_{\ep}=\f{\tau\ep}{4}$,  we obtain $J_{\ep}(u)\geq \delta_{\ep}$ if $0<\ep<\ep_1$.
\end{proof}
\begin{lemma}
For any $c\in \mathbb{R}$, $J_{\ep}$ satisfies the $(\text{PS})_c$ condition. If $(u_k)\subset H $ is a sequence such that  $J_{\ep}(u_k)\rightarrow c$ and $J_{\ep}^{\prime}(u_k)\rightarrow 0$, then up to a subsequence, $u_k$ converges  to some $u$ in $H$.
\end{lemma}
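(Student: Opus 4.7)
The plan is to reduce verification of $(\text{PS})_c$ to an a priori bound on $\V u_k\V_H$, after which finite-dimensionality of $H(\O)$ supplies a convergent subsequence for free. Indeed, since $\O$ is a bounded domain in a locally finite graph, $\O\cup\p\O$ is a finite set, and consequently $H(\O)$ is a finite-dimensional normed space: every bounded sequence in it has a subsequence converging in $\V\cdot\V_H$, and continuity of $J_\ep$ and $J_\ep'$ then automatically transfers the PS hypotheses to the limit.

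For the a priori bound I would use the standard Nehari-type combination engineered to annihilate the superlinear term. Noting
\bna
\langle J_\ep'(u_k),u_k\rangle=\V u_k\V_H^2-\l\io u_k^2\,d\mu-\io\ve u_k\ve^p\,d\mu-\ep\io f u_k\,d\mu,
\ena
the combination $J_\ep(u_k)-\f{1}{p}\langle J_\ep'(u_k),u_k\rangle$ cancels the $\ve u_k\ve^p$ contribution and equals
\bna
\le(\f{1}{2}-\f{1}{p}\ri)\Bigl[\V u_k\V_H^2-\l\io u_k^2\,d\mu\Bigr]-\le(1-\f{1}{p}\ri)\ep\io f u_k\,d\mu.
\ena
The variational definition of $\l_1(\O)$ yields $\V u_k\V_H^2-\l\io u_k^2\,d\mu\geq\tau\V u_k\V_H^2$ with $\tau=(\l_1(\O)-\l)/\l_1(\O)>0$ (already exploited in Lemma 2.1), and the $f$-term is controlled by $\ep\V f\V_{H'}\V u_k\V_H$. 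Combined with $J_\ep(u_k)=c+o(1)$ and $|\langle J_\ep'(u_k),u_k\rangle|\leq o(1)\V u_k\V_H$, this produces
\bna
\le(\f{1}{2}-\f{1}{p}\ri)\tau\V u_k\V_H^2\leq|c|+o(1)+o(1)\V u_k\V_H+\le(1-\f{1}{p}\ri)\ep\V f\V_{H'}\V u_k\V_H,
\ena
and because $p>2$ the left-hand coefficient is strictly positive, so this quadratic-versus-linear inequality forces $(\V u_k\V_H)$ to be bounded.

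No serious obstacle arises: the whole argument rests on the finiteness of $\O\cup\p\O$, which is the decisive simplification relative to the continuum setting and trivialises compactness. The only points requiring care are that the two structural hypotheses enter separately --- $\l<\l_1(\O)$ supplies the coercive bracket in the $H$-norm, while $p>2$ is what guarantees a genuinely quadratic leading term after the Nehari cancellation --- and both are already built into the hypotheses of Theorem 1.1.
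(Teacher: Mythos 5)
Your proposal is correct and follows essentially the same route as the paper: both derive an a priori bound on $\V u_k\V_H$ from a linear combination of $J_\ep(u_k)$ and $\langle J_\ep'(u_k),u_k\rangle$ that removes the $\ve u_k\ve^p$ term (you take the coefficient $1/p$ to cancel it outright, the paper takes $1/2$ to isolate it and substitutes back --- a cosmetic difference), then invoke $\l<\l_1(\O)$ for the coercive constant $\tau$ and the finite-dimensionality (pre-compactness) of $H(\O)$ to extract a convergent subsequence. No gap.
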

\begin{proof}
If $J_{\ep}(u_k)\rightarrow c$ and $J_{\ep}^{\prime}(u_k)\rightarrow 0$, then we have
\be\label{fu1}\f{1}{2}\int_{\O\cup\partial \O}\ve \D u_k\ve^2d\mu-\f{1}{2}\io\l u_k^2d\mu-\f{1}{p}\io \ve u_k\ve^pd\mu-\ep\io f(x)u_kd\mu=c+o_k(1),\ee
\be\label{fu2} \le|\int_{\O\cup\partial \O}\ve \D u_k\ve^2d\mu-\io\l u_k^2d\mu-\io \ve u_k\ve^pd\mu-\ep\io f(x)u_kd\mu\ri|= o_k(1)\V u_k\V_H,\ee
where $o_k(1)\rightarrow 0$ as $k\rightarrow +\infty$.

From (\ref{fu1}) and (\ref{fu2}), we have
\be \le(\f{1}{2}-\f{1}{p}\ri)\io \ve u_k\ve^pd\mu=c+\f{\ep}{2}\io fu_kd\mu+o_k(1)\V u_k\V_H+o_k(1).\ee
Hence
 \be \begin{split}
\tau \V u_k\V^2_H \leq &\int_{\O\cup\partial \O}\ve \D u_k\ve^2d\mu-\io\l u_k^2d\mu\\
\leq & \f{2pc}{p-2}+\f{(2p-2)\ep}{p-2}\V f\V_{H^{\prime}}\V u_k\V_{H}+o_k(1)\V u_k\V_H+o_k(1)\\
\leq & \f{2pc}{p-2}+\f{4(p-1)^2\ep^2}{(p-2)^2\tau}\V f\V_{H^{\prime}}^2+\f{\tau}{4} \V u_k\V^2_{H}+\f{\tau}{4}\V u_k\V^2_{H}+o_k(1).
\end{split}
\ee
Thus we have $(u_k)$ is bounded in $H$. Since $H$ is pre-compact, it follows that up to a subsequence, $u_k$ converges  to some $u$ in $H$.
\end{proof}
Now we arrive at a position to prove the main theorem. For any $u^*\in H$, passing to the limit $t\rightarrow +\infty$, we get
$$J_{\ep}(tu^*)=\f{t^2}{2}\int_{\O\cup\partial \O}\ve \D u^*\ve^2d\mu-\f{t^2}{2}\io\l {u^*}^2d\mu-\f{t^p}{p}\io \ve u^*\ve^pd\mu-t\ep\io f(x)u^*d\mu\rightarrow -\infty$$ as $t\rw \infty$.
Hence there exists some $\tilde{u}\in H$ such that
$J_{\ep}(\tilde{u})<0$ with $\V \tilde{u}\V_H>r_{\ep}$. Combining Lemma 2.1, we see that $J_{\ep}$ satisfies all the hypotheses of the mountain-pass theorem:
 $J_{\ep}\in C^1(H,\mathbb{R}) $; $J_{\ep}(0)=0$; when $\V u\V_{H}=r_{\ep}$, $J_{\ep}(u)\geq \delta_{\ep}$; $J_{\ep}(\tilde{u})<0$ for some $\tilde{u}$ with $\V \tilde{u}\V_H>r_{\ep}$.

Then we have
$$c=\min\limits_{\gamma\in \Gamma}\max\limits_{u\in \gamma}J_{\ep}(u)$$ is the critical point of $J_{\ep}$, where
$$\Gamma=\{\gamma\in C([0,1], H):\gamma (0)=0, \gamma (1)=\tilde{u}\}.$$

Thus there exists a weak solution $u_c\in H$ with $J_{\ep}(u_c)\geq \delta_{\ep}$.

\begin{lemma}
There exists $\tau_0$ and $u^*\in H$ with $\V u^*\V_H=1$ such that $J_{\ep}(tu^*)<0$  if  $0<t<\tau_0$.
\end{lemma}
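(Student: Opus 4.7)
The key observation is that for small $t>0$ the functional
$$J_\epsilon(tu^*)=\frac{t^2}{2}\int_{\O\cup\p\O}|\D u^*|^2\,d\mu-\frac{t^2\l}{2}\io (u^*)^2\,d\mu-\frac{t^p}{p}\io |u^*|^p\,d\mu-t\epsilon\io f u^*\,d\mu$$
has a linear-in-$t$ contribution coming only from the forcing term, while every other contribution is of order $t^2$ or $t^p$. Since $p>2$, for any fixed $u^*$ with $\|u^*\|_H=1$ the sign of $J_\epsilon(tu^*)$ for small $t$ is governed by the sign of $-\epsilon\int_\O f u^*\,d\mu$. So the plan is to exhibit a normalized $u^*\in H$ for which $\int_\O f u^*\,d\mu>0$.

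Since the statement of the main theorem is vacuous unless $f\not\equiv 0$ on $\O$, we may assume $f$ is a nonzero element of $H'(\O)$. Thus the linear functional $u\mapsto \int_\O f u\,d\mu$ is not identically zero on $H$, so there exists $v\in H\setminus\{0\}$ with $\int_\O f v\,d\mu\neq 0$; replacing $v$ by $-v$ if necessary and rescaling, we set $u^*=v/\|v\|_H$, obtaining $\|u^*\|_H=1$ and $A:=\int_\O f u^*\,d\mu>0$.

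With this choice I would write
$$J_\epsilon(tu^*)=t\Bigl[\tfrac{t}{2}\bigl(1-\l\,\|u^*\|_{L^2(\O)}^2\bigr)-\tfrac{t^{p-1}}{p}\,\|u^*\|_{L^p(\O)}^p-\epsilon A\Bigr].$$
Since $\lambda<\lambda_1(\O)$, the coefficient $1-\l\|u^*\|_{L^2(\O)}^2$ is a bounded constant (in fact nonnegative by the definition of $\l_1(\O)$ and the fact that $\|u^*\|_H=1$); call the bracketed quantity $B$ once $t$ is fixed. As $t\to 0^+$, the first two terms inside the bracket tend to $0$, while the last term stays at $-\epsilon A<0$; hence the bracket is strictly negative for all sufficiently small $t$. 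Choosing $\tau_0>0$ to be the largest $t$ for which the bracket remains negative on $(0,\tau_0)$ yields $J_\epsilon(tu^*)<0$ for $0<t<\tau_0$, as required.

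The only genuinely delicate point is the nondegeneracy step: we need $f$ to induce a nontrivial linear functional on $H$, which is why we require $f\not\equiv 0$ on $\O$ (and not merely $f\in H'(\O)$ as the theorem literally states). No subtle estimate is needed beyond this, since the linear term in $t$ always dominates the quadratic and $p$-power terms near $t=0$.
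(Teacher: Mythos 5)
Your proof is correct, but it reaches the conclusion by a genuinely different and more elementary route than the paper. The paper manufactures a direction of descent by solving the auxiliary linear problem $\Delta^2 u=\l u+f$: it minimizes $J_f(u)=\f12\int_{\O\cup\p\O}|\D u|^2d\mu-\f{\l}{2}\io u^2d\mu-\io fu\,d\mu$ to obtain a weak solution $\bar u$, tests the weak formulation against $\bar u$ itself to get $\io f\bar u\,d\mu=\int_{\O\cup\p\O}|\D\bar u|^2d\mu-\l\io\bar u^2d\mu>0$, and then observes that $\f{d}{dt}J_\ep(t\bar u)\big|_{t=0}=-\ep\io f\bar u\,d\mu<0$ before normalizing. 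You skip the auxiliary PDE entirely: since $f$ is a nonzero element of $H'$, some $v$ satisfies $\io fv\,d\mu\neq 0$, and a sign flip plus normalization gives $u^*$ with $\io fu^*\,d\mu>0$; the linear term $-t\ep\io fu^*\,d\mu$ then dominates the $t^2$ and $t^p$ terms near $t=0$. Your approach is shorter and avoids the minimization argument altogether; the paper's approach produces a specific, canonical $\bar u$ but buys nothing extra for this lemma. You are also right to flag that the hypothesis $f\not\equiv 0$ (as an element of $H'$) is genuinely needed: the lemma is false for $f\equiv 0$, since then $J_\ep(tu^*)=\f{t^2}{2}\big(1-\l\|u^*\|^2_{L^2}\big)-\f{t^p}{p}\|u^*\|^p_{L^p}>0$ for small $t$ because $\l<\l_1(\O)$ forces the quadratic coefficient to be positive. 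The paper leaves this assumption implicit — its strict inequality $\io f\bar u\,d\mu>0$ silently requires $\bar u\not\equiv 0$, hence $f\not\equiv 0$ — so your treatment is, on this point, the more careful of the two.
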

\begin{proof}
We study the equation
\be\label{ne2}\Delta^2u=\l u+  f\ee in $H(\O)$.  Define the functional
$$J_{f}(u)=\f{1}{2}\int_{\O\cup\partial \O}\ve \D u\ve^2d\mu-\f{1}{2}\io\l u^2d\mu-\io f(x)ud\mu. $$

Noting that \be\label{je1}\f{1}{2}\int_{\O\cup\partial \O}\ve \D u\ve^2d\mu-\f{1}{2}\io\l u^2d\mu\geq \f{\tau}{2}\V u\V_{H}^2 \ee
and \be\label{je2}\le|\io f(x)ud\mu\ri|\leq \V f\V_{H^{\prime}}\V u\V_{H}\leq \f{\tau}{4}\V u\V_{H}^2+\f{1}{\tau}\V f\V_{H^{\prime}}^2,\ee
we  have that
$J_{f}$ have a lower bound on $H$.

Set
$$m_f=\inf\limits_{u\in H}J_f(u).$$  There exists $u_k\in H$ such that $J_f(u_k)\ra m_f$.  From  (\ref{je1}) and (\ref{je2}), we know $u_k$ is bounded in $H$.

Hence there exists $u^*\in H$ such that $u_k\rightharpoonup \bar{u}$ weakly in $H$. Then
$$J_f(\bar{u})\leq \lim\inf\limits_{k\rightarrow\infty}J_f(u_k)=m_f$$ and $\bar{u}$ is the weak solution of (\ref{ne2}).
It follows that
\be\label{nq2}\io f\bar{u}d\mu=\int_{\O\cup\partial \O}\ve \D \bar{u}\ve^2d\mu-\l\io (\bar{u})^2d\mu>0\ee
Now we compute the derivative of $J_{\ep}(t\bar{u})$:
$$\f{d}{dt}J_{\ep}(t\bar{u})=t\int_{\O\cup\partial \O}\ve \D \bar{u}\ve^2d\mu-t\io\l (\bar{u})^2d\mu-t^{p-1}\io \ve \bar{u}\ve^pd\mu-\ep\io f(x)\bar{u}d\mu.$$

By (\ref{nq2}), we obtain
$$\f{d}{dt}J_{\ep}(t\bar{u})\Big\ve_{t=0}<0.$$
Letting $u^*=\f{\bar{u}}{\V\bar{u}\V_{H}}$, we  finish the proof.
\end{proof}
\begin{lemma}
Choose $\ep$ such that  $0<\ep<\ep_1$ where $\ep_1$ is the same as in Lemma 2.1. Then there exists a function $u_0\in H$ with $\V u_0\V_{H}\leq 2r_{\ep}$ such that
$$J_{\ep}(u_0)=c_{\ep}=\inf\limits_{\V u\V\leq 2r_{\ep  }}J_{\ep}(u),$$
where $r_{\ep}=\sqrt{\ep},$ $c_{\ep}<0$.
\end{lemma}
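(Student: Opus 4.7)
The plan is to produce $u_0$ by the direct method applied to $J_{\ep}$ on the closed ball $B_{2r_{\ep}}:=\{u\in H:\V u\V_{H}\leq 2r_{\ep}\}$. Two things then need to be checked: first, that $c_{\ep}<0$, so that the infimum is strictly negative; second, that this infimum is actually attained by some $u_0\in B_{2r_{\ep}}$.

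For the strict negativity of $c_{\ep}$, I would invoke Lemma 2.3, which furnishes $u^{*}\in H$ with $\V u^{*}\V_{H}=1$ and a $\tau_0>0$ such that $J_{\ep}(tu^{*})<0$ for every $t\in(0,\tau_0)$. Choosing any $t_0\in(0,\min\{\tau_0,2r_{\ep}\})$, the vector $t_0 u^{*}$ lies in $B_{2r_{\ep}}$ and satisfies $J_{\ep}(t_0 u^{*})<0$, which immediately gives $c_{\ep}\leq J_{\ep}(t_0 u^{*})<0$.

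For attainment, take a minimizing sequence $\{u_k\}\subset B_{2r_{\ep}}$ with $J_{\ep}(u_k)\ra c_{\ep}$. Because $\O$ is a bounded subset of the locally finite graph $V$, the space $H(\O)$ is finite-dimensional, so the closed bounded set $B_{2r_{\ep}}$ is compact. Extract a convergent subsequence $u_k\ra u_0$; by the continuity of $J_{\ep}$ on the finite-dimensional space $H$, one obtains $J_{\ep}(u_0)=c_{\ep}$ together with $\V u_0\V_{H}\leq 2r_{\ep}$, as required.

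No step here is a genuine obstacle, since the whole argument is the direct method in a finite-dimensional setting; the only delicate point is making sure that the test function supplied by Lemma 2.3 indeed lies inside $B_{2r_{\ep}}$, which is why $t_0$ must be chosen smaller than $2r_{\ep}$ as well as smaller than $\tau_0$. As a useful by-product, combining $c_{\ep}<0$ with Lemma 2.1 (which gives $J_{\ep}\geq\delta_{\ep}>0$ on the annulus $r_{\ep}\leq\V u\V_{H}\leq 2r_{\ep}$) forces $\V u_0\V_{H}<r_{\ep}$, so $u_0$ is an interior minimizer and hence a critical point of $J_{\ep}$; this will deliver the second weak solution, distinct from the mountain-pass solution $u_c$ since $J_{\ep}(u_0)<0<\delta_{\ep}\leq J_{\ep}(u_c)$.
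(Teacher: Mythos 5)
Your proposal is correct and follows essentially the same route as the paper: negativity of $c_{\ep}$ comes from the test function of Lemma 2.3 placed inside the ball, and attainment comes from the direct method on $B_{2r_{\ep}}$. The only cosmetic difference is that you invoke compactness of the closed ball in the finite-dimensional space $H(\O)$ directly, whereas the paper phrases the same step via weak convergence plus strong $L^q$ convergence of the minimizing sequence; since the paper itself relies on finite-dimensionality (pre-compactness of $H$) elsewhere, the two arguments coincide in substance, and your concluding remarks about $\V u_0\V_H<r_{\ep}$ and the resulting critical point match the paper's continuation.
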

\begin{proof}
By (\ref{gi1}), we see that $J_{\ep}$ has a lower bound on
$$B_{2r_{\ep}}=\{u\in H:\V u\V_{H}\leq 2r_{\ep}\}.$$
Combining Lemma 2.3, we get
$c_{\ep}<0$.

Let $(u_{k})\subset H$ be a sequence satisfying  $\V u_k\V_{H}\leq 2r_{\ep}$ and $J_{\ep}(u_k)\rw c_{\ep}$. Then up to a sequence, $u_k$ converges weakly to $u_0$  in $H$ and converges strongly to $u_0$  in $L^q(V)$ for any $1\leq q\leq +\infty$. It follows that

$$\lim\limits_{k\rw +\infty}\int_{\O} fu_kd\mu=\int_{\O} fu_0 d\mu,$$
$$\V u_0\V_H\leq \limsup\limits_{k\rw +\infty}\V u_k\V_H\leq 2r_{\ep},$$
$$\lim\limits_{k\rw +\infty}\int_{\O}\l u_k^2d\mu=\int_{\O}\l u_0^2d\mu,$$
$$\lim\limits_{k\rw +\infty}\int_{\O} u_k^pd\mu=\int_{\O} u_0^p d\mu.$$
Hence $$J_{\ep}(u_0)\leq \limsup\limits_{k\rw +\infty}J_{\ep}(u_k)=c_{\ep}$$ and $u_0$ is the minimizer of $J_{\ep}$ on $B_{2r_{\ep}}$. Lemma 2.1 implies that $\V u_0\V_H<r_{\ep}$.
For any $\varphi \in C_c(V)$, let $\psi(t)=J_{\ep}(u_0+t\varphi)$. Then $\psi$ is a smooth function in $t$.
It is easy to see that there is $\eta>0$ such that $u_0+t\varphi\in B_{2r_{\ep}}$ if $\vert t\vert <\eta$. This means that $\psi(0)$ is the minimum of $\psi(t)$ on $(-\eta,\eta)$. By $\psi'(0)=0$, we get
$$\int_{\O\cup\partial \O}\Delta u_0\Delta \varphi d\mu-\l\int_{\O} u_0\varphi d\mu-\int_{\O}\vert u_0\vert ^{p-1} u_0\varphi d\mu-\ep\int_{\O} f\varphi d\mu=0.$$

We conclude that $u_0$ is a weak solution of (\ref{bhe}) and complete the proof.
\end{proof}
Clearly, $u_c$ and $u_0$ are two distinct solutions of (\ref{bhe}) since $J_{\ep}(u_c)>0$ and $J_{\ep}(u_0)<0$.

\vskip 30 pt
\noindent{\bf Acknowledgement}

This work is  partially  supported by the National Natural Science Foundation of China (Grant No. 11721101), and by National Key Research and Development Project SQ2020YFA070080.
\vskip 30 pt


\begin{thebibliography}{10}


\bibitem{AF13}  Claudianor O. Alves, Giovany M. Figueiredo,  Multiplicity of nontrivial solutions to a biharmonic equation via Lusternik-Schnirelman theory, Math. Methods Appl. Sci. 36(6) (2013) 683-694.
\bibitem{CD19}Daomin Cao,  Wei Dai,  Classification of nonnegative solutions to a bi-harmonic equation with Hartree type nonlinearity, Proc. Roy. Soc. Edinburgh Sect. A 149 (4) (2019) 979-994.

\bibitem{DW99} Yinbin Deng, Cengsheng Wang, On inhomogeneous biharmonic equations involving critical exponnets, Proc. Roy. Soc. Edin. 129 A (1999) 925-946.
\bibitem {EFJ} D. E. Edmunds, D.Fortunato, E.Jannelli,  Critical exponents, critical dimensions and the biharmonic operator, Arch. Rational Mech. Anal. 112(3) (1990) 269-289.
\bibitem{GJ18} Huabin Ge, Wenfeng Jiang, Kazdan-Warner equation on infinite graphs. J. Korean Math. Soc. 55(5) (2018)  1091-1101.
\bibitem{GLY16} Alexander Grigor'yan, Yong Lin,  Yunyan Yang, Yamabe type equations on graphs, J. Differential Equations 261 (2016) 4924-4943.
\bibitem{GLY17} Alexander Grigor'yan, Yong Lin,  Yunyan Yang, Existence of positive solutions to some nonlinear
equations on locally finite graphs, Science China Mathematics 60(7) (2017) 1311-1324.
\bibitem{GLY}  Alexander Grigor'yan, Yong Lin,  Yunyan Yang, Kazdan-Warner equation on graph. Calc. Var. Partial Differential Equations 55(4) (2016) Art. 92 13 pp.

\bibitem{HSZ} Xiaoli Han, Mengqiu Shao, Liang Zhao, Existence and convergence of solutions for nonlinear biharmonic equations on graphs, J. Differential Equations 268 (2020) 3936-3961.


\bibitem{LW17} Yong Lin, Yiting Wu, The existence and nonexistence of global solutions for a semilinear heat equation on graphs, Calc. Var. Partial Differential Equations 56(4) (2017) Paper No.102  22 pp.

\bibitem{LY20}Shuang Liu, Yunyan Yang, Multiple solutions of Kazdan-Warner equation on graphs in the negative case, Calc. Var. Partial Differential Equations 59(5) (2020) 164.
\bibitem{PS86} Patrizia Pucci,  James  Serrin, A general variational identity. Indiana Univ. Math. J. 35(3) (1986)  681-703.

\bibitem{WS2009}  Youjun Wang, Yaotan She, Multiple and sign-changing solutions for a class
of semilinear biharmonic equation, J. Differential Equations 246 (2009) 3109-3125.
\bibitem{WZ08} Weihua Wang, Peihao Zhao, Nonuniformly nonlinear elliptic equations of p-biharmonic type, J. Math. Anal. Appl. 348 (2008) 730-738.

\bibitem{ZZ18} Ning Zhang, Liang Zhao, Convergence of ground state solutions for nonlinear
Schr\"{o}dinger equations on graphs, Sci. China Math. 61(8) (2018) 1481-1494.



\end{thebibliography}
\end{document}